\newtheorem{theorem}{Theorem}
\newtheorem{proposition}[theorem]{Proposition}
\newtheorem{corollary}[theorem]{Corollary}
\newtheorem{observation}[theorem]{Observation}
\newtheorem{example}{Example}
\theoremstyle{definition}
\newtheorem{definition}[theorem]{Definition}
\begin{document}
\title{Zero forcing for inertia sets}
\author{Steve Butler \and Jason Grout \and H.\ Tracy Hall}
\date{\empty}
\maketitle

\begin{abstract}
Zero forcing is a combinatorial game played on a graph with a goal of turning all of the vertices of the graph black while having to use as few ``unforced'' moves as possible.  This leads to a parameter known as the zero forcing number which can be used to give an upper bound for the maximum nullity of a matrix associated with the graph.

We introduce a new variation on the zero forcing game which can be used to give an upper bound for the maximum nullity of a matrix associated with a graph that has $q$ negative eigenvalues.  This gives some limits to the number of positive eigenvalues that such a graph can have and so can be used to form lower bounds for the inertia set of a graph.
\end{abstract}

\section{Introduction}
Given a graph $G$ we can associate a collection of matrices ${\cal S}(G)$, such that each matrix $A\in {\cal S}(G)$ is Hermitian and the off-diagonal entries are non-zero if and only if the entry corresponds to an edge of the graph; the diagonal entries can be either zero or nonzero.  We can then ask questions about matrices in ${\cal S}(G)$.  One example is the inverse eigenvalue problem, which asks what set of eigenvalues are possible for matrices belonging to ${\cal S}(G)$.  The complete inverse eigenvalue problem has been solved for very few graphs \cite{inverse}.

A relaxation of the inverse eigenvalue problem is to compute the minimum possible rank of matrices in ${\cal S}(G)$, denoted ${\rm mr}(G)$.  Since the diagonal has no restrictions this is equivalent to determining the maximum possible multiplicity of an eigenvalue for matrices in ${\cal S}(G)$, in particular the maximum nullity of a matrix in ${\cal S}(G)$.  (Recall the nullity is the dimension of the eigenspace associated with the eigenvalue $0$.)  Any matrix in ${\cal S}(G)$ gives a lower bound for the nullity of a matrix associated with the graph, but upper bounds must come from a more general argument.

One method of producing upper bounds for the nullity that has proven successful for small graphs and graphs with some degree of structure is the zero forcing number of a graph (see \cite{AIM,AIM2,HCY}).  The zero forcing number, denoted $Z(G)$, is the smallest number of vertices that when colored black will ``force'' all vertices to be colored black using a color change rule which we will discuss in the next section.

A problem lying between the inverse eigenvalue problem and the maximum nullity problem is determining the inertia set of a graph, denoted ${\cal I}(G)$.  This is the collection of all possible pairs $(p,q)$ where $p$ is the number of positive eigenvalues and $q$ the number of negative eigenvalues for $A\in{\cal S}(G)$.  Given $p$ and $q$ the multiplicity of zero as an eigenvalue, i.e., nullity, is $|G|-p-q$.  In particular, an upper bound on the nullity of such a matrix given that it has $q$ negative eigenvalues will produce a lower bound for $p$ and so constrains the inertia set of a graph.

In this paper we will introduce a new combinatorial game played with two players that bounds the nullity of matrices associated with a graph and which have $q$ negative eigenvalues.  This new game is a generalization of previous zero forcing games with an extra parameter $q$ which will show up in a new forcing rule involving the second player.  The minimal cost it will take for the Black player in the game to color all vertices Black will be denoted by $Z_q(G)$.  We will show how $Z_q(G)$ gives an upper bound for the nullity of a matrix $A\in {\cal S}(G)$ with $q$ negative eigenvalues.

For the special case $q\ge n$ the parameter $Z_q(G)=Z(G)$, i.e., the regular zero forcing number, while for $q=0$, the parameter $Z_0(G)=Z_+(G)$, the positive semidefinite zero forcing number (introduced in \cite{AIM2}).

In Section~\ref{sec:Z} we will give a short review of both zero forcing and  positive semidefinite zero forcing, which we will generalize in Section~\ref{sec:Zq} with our new zero forcing parameter $Z_q(G)$.  We then will introduce $\widehat{Z}_q(G)$ in Section~\ref{sec:hat} which will give a further improvement for determining the inertia set of a graph.  In Section~\ref{sec:final} we will show how to use these new parameters to bound the inertia set of a graph.  Finally, in Section~\ref{sec:algo} we will give an algorithm for computing $Z_q(G)$.

We will use the following notation in this paper:  For a subset of vertices, $W\subseteq V$, let $G[W]$ be the induced subgraph of $G$ on the vertices $W$.

\section{Review of zero forcing and semidefinite zero forcing}\label{sec:Z}
Zero forcing can be thought of as a game, played on the graph $G$ by a single player.  The game consists of two simple operations, coloring a vertex black and a color change rule.

\begin{definition}
The \emph{color change rule}\/ for a graph with vertices painted black and white gives that any black vertex with exactly one white vertex as a neighbor (and arbitrarily many black vertices), the unique adjacent white vertex changes color from white to black.  An application of the color change rule is known as \emph{forcing}.
\end{definition}

The color change rule will play an important role in the analysis of various zero forcing games we will discuss.  The motivation behind the definition comes from looking at vectors in the null space of a matrix.  Suppose ${\bf x}$ is in the null space for a matrix $A\in{\cal S}(G)$, i.e., we have $A{\bf x}={\bf 0}$.  Further, when the vertices of the graph are colored black the corresponding entries of ${\bf x}$ are $0$ while the vertices colored white are undetermined.  The color change rule is the observation that we might also have some additional entries of ${\bf x}$ that must also be $0$.  This is because if vertex $i$ is black and its only white neighbor is vertex $j$ then, $(A{\bf x})_i=a_{ij}x_j=0$.  But since $a_{ij}\ne0$ then we must have $x_j=0$, i.e., the vertex in the graph can also be colored black.

We are now ready to give the zero forcing game.

\bigskip\noindent\hfil\boxed{\parbox{0.97\textwidth}{
{\bf Zero Forcing Game} -- All the vertices of the graph $G$ are initially colored white and there is one player, known as Black, who has a collection of tokens.  Black will repeatedly apply one of the following two operations until all vertices are colored black:
\begin{itemize}
\item Black can change any vertex from white to black, at the cost of a single token.
\item Black can apply the color change rule on the entire graph.  This operation does not cost Black a token.
\end{itemize}
}}\hfil\bigskip

The minimal number of tokens that Black must use in a given strategy to change all of the vertices from white to black is the zero forcing number $Z(G)$.  Because of the non-adaptive nature of the game, Black can choose to first only consider the option of using tokens to change vertices from white to black and then apply the second option of using the color change rule.  In this setting the set of vertices that Black initially spends his tokens on are known as a {\em zero forcing set}\/ and $Z(G)$ is then the minimal size of a zero forcing set.  This is the way that zero forcing is usually defined and introduced (i.e., as a set rather than a strategy).

By the color change rule it follows that any vector in the null space of $A\in{\cal S}(G)$ which is $0$ on a zero forcing set of $G$ is the ${\bf 0}$ vector.  On the other hand, by considering the dimensions of various subspaces we have the following.

\begin{observation}\label{obs:dim}
If the nullity of a matrix is more than $k$, then for any $k$ specified entries there is a nonzero vector ${\bf x}$ in the null space which will vanish at those specified entries.
\end{observation}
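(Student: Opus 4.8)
The plan is to argue by a dimension count on the null space, treating "vanishing on $k$ specified entries" as lying in the kernel of a suitable projection. Let $A$ be the matrix in question, with $N = \{\mathbf{x} : A\mathbf{x} = \mathbf{0}\}$ its null space, so by hypothesis $\dim N \geq k+1$. Fix a set $S$ of $k$ coordinate positions. First I would introduce the linear map $\pi_S : N \to \mathbb{C}^{k}$ that sends a null vector $\mathbf{x}$ to the tuple of its entries indexed by $S$; this is clearly linear, and its kernel $\ker \pi_S$ is precisely the set of vectors in $N$ that vanish at all the specified entries.

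The key step is then the rank--nullity theorem applied to $\pi_S$: since the image of $\pi_S$ sits inside $\mathbb{C}^{k}$, we have $\dim(\operatorname{im}\pi_S) \leq k$, and therefore
\[
\dim(\ker \pi_S) \;=\; \dim N - \dim(\operatorname{im}\pi_S) \;\geq\; (k+1) - k \;=\; 1.
\]
Hence $\ker \pi_S$ contains a nonzero vector $\mathbf{x}$, which is a nonzero element of the null space vanishing on the $k$ prescribed entries, as claimed.

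There is essentially no obstacle here; the only point worth stating carefully is that the codomain bound $\dim(\operatorname{im}\pi_S)\le k$ is what forces the kernel to be nontrivial, so the hypothesis "nullity more than $k$" (rather than merely "at least $k$") is used exactly once and is sharp. I would close by remarking that this is the converse-flavored companion to the color change rule observation: the color change rule says a null vector forced to be $0$ on a zero forcing set must be $\mathbf{0}$, while this observation says that if the nullity is large enough then prescribing only $k$ zero entries cannot pin the vector down.
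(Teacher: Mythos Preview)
Your proof is correct and is precisely the dimension argument the paper has in mind; the paper does not spell out a proof at all but simply introduces the observation with the phrase ``by considering the dimensions of various subspaces,'' which your rank--nullity computation makes explicit.
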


We immediately have the following.

\begin{proposition}[AIM \cite{AIM}]\label{prop:AIM}
For any $A\in{\cal S}(G)$ the dimension of the nullity is bounded above by the size of any zero forcing set, in particular by $Z(G)$.
\end{proposition}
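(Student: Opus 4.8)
The plan is to argue by contradiction, combining Observation~\ref{obs:dim} with the null-space interpretation of the color change rule given just before the statement. Fix $A\in\mathcal{S}(G)$ and let $B$ be a zero forcing set with $|B|=k$; it suffices to prove that the nullity of $A$ is at most $k$, since minimizing over all zero forcing sets then gives the bound by $Z(G)$ as well. Suppose instead that the nullity exceeds $k$. Applying Observation~\ref{obs:dim} with the $k$ specified entries taken to be exactly the vertices of $B$, we obtain a nonzero vector $\mathbf{x}$ in the null space of $A$ with $x_i=0$ for every $i\in B$.

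Next I would follow the forcing process. Color black exactly the vertices of $B$; since $B$ is a zero forcing set, repeated application of the color change rule eventually colors every vertex black. I claim, by induction on the number of forces performed, that every black vertex $i$ satisfies $x_i=0$. The base case is the choice of $\mathbf{x}$, which vanishes on $B$. For the inductive step, suppose a black vertex $i$ forces its unique white neighbor $j$. Since $A\mathbf{x}=\mathbf{0}$, looking at coordinate $i$ gives $0=(A\mathbf{x})_i=\sum_{\ell}a_{i\ell}x_\ell$. By the inductive hypothesis $x_\ell=0$ for $\ell=i$ and for every black neighbor $\ell$ of $i$, so the sum collapses to $a_{ij}x_j=0$; because $ij$ is an edge, $a_{ij}\neq 0$, hence $x_j=0$, completing the induction. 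When the process terminates, all vertices are black, so $x_i=0$ for all $i$, i.e.\ $\mathbf{x}=\mathbf{0}$, contradicting the choice of $\mathbf{x}$.

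I do not expect a genuine obstacle: once Observation~\ref{obs:dim} is available the argument is a short induction. The one point to handle with care is the direction in which Observation~\ref{obs:dim} is invoked — it produces a nonzero null vector vanishing on a prescribed set of size $k$ precisely under the hypothesis that the nullity is larger than $k$, which is exactly the contrapositive form needed here. A second, minor point is that the particular order in which forces are applied is immaterial: any valid forcing sequence drives all vertices black, and the induction above runs along any such sequence, so no reasoning about strategy or optimality is required for this proposition.
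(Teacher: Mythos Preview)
Your proof is correct and follows essentially the same approach as the paper: assume the nullity exceeds the size of a zero forcing set, invoke Observation~\ref{obs:dim} to get a nonzero null vector vanishing on that set, and use the color change rule to conclude the vector is identically zero, a contradiction. Your version simply spells out the induction along the forcing sequence that the paper summarizes in one line by citing the earlier discussion of the color change rule.
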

\begin{proof}
Suppose not, then by the above observation there would exist a nonzero vector in the null space which is $0$ on all the vertices corresponding to a minimally sized zero forcing set.  But this is impossible since the only vector in the null space which is $0$ on a zero forcing set is ${\bf 0}$.
\end{proof}

A modification of zero forcing was considered when the matrices in ${\cal S}(G)$ were further restricted to require that the matrices be positive semidefinite.  Since this imposes additional relationships on entries in the matrix it is possible to modify the game to give Black more power in forcing vertices to be black.

\bigskip\noindent\hfil\boxed{\parbox{0.97\textwidth}{
{\bf Semidefinite Zero Forcing Game} --  All the vertices of the graph $G$ are initially colored white and there is one player, known as Black, who has a collection of tokens.  Black will repeatedly apply one of the following three options until all vertices are colored black:
\begin{itemize}
\item Black can change any vertex from white to black, at the cost of a single token.
\item Black can apply the color change rule on the entire graph.  This operation does not cost Black a token.
\item Let the vertices currently colored black be denoted by $B$, and $W_1,W_2,\ldots,W_k$ be the vertex sets of the connected components of $G[V\setminus B]$.  Black can  apply the color change rule on $G[B\cup W_i]$ for some $1\le i\le k$.  This operation does not cost Black a token.
\end{itemize}
}}\hfil\bigskip

In particular we have the same option to spend tokens and for applying the color change rule on the whole graph.  We also have an option allowing us to apply the color change rule on a {\em smaller} part of the graph --- that is, we may be allowed to ignore some of the white neighbors of a black vertex, leaving ``exactly one'' white neighbor under consideration, which can then be forced black.  The minimal number of tokens that Black must use to change all of the vertices from white to black in this game is denoted $Z_+(G)$.  As before, Black can elect to initially only spend tokens and then apply either of the options using the color change rule for the remainder of the game, and so the literature discusses positive semidefinite forcing sets and not positive semidefinite forcing strategies.  In this setting $Z_+(G)$ is the size of the smallest possible such set.

\begin{theorem}\label{thm:Z+}
The nullity of $A\in {\cal S}(G)$ when $A$ is positive semidefinite is at most $Z_+(G)$.
\end{theorem}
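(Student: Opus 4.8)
The plan is to mimic the proof of Proposition~\ref{prop:AIM}, with one extra ingredient that makes the new ``component'' color change rule legitimate when $A$ is positive semidefinite. As in the non-adaptive reformulation, it suffices to fix a minimum positive semidefinite zero forcing set $B_0$ (so $|B_0|=Z_+(G)$), assume for contradiction that the nullity of $A$ exceeds $Z_+(G)$, and invoke Observation~\ref{obs:dim} to produce a nonzero ${\bf x}$ in the null space of $A$ that vanishes on $B_0$. I then track the invariant ``${\bf x}$ vanishes on the set of currently black vertices'' through the forcing moves; once every vertex is black the invariant forces ${\bf x}={\bf 0}$, the desired contradiction. The whole-graph color change rule preserves the invariant by exactly the computation recalled in the introduction (if $i$ is black with unique white neighbor $j$ then $a_{ij}x_j=(A{\bf x})_i=0$, so $x_j=0$), so the only step needing a genuinely new argument is the component rule.

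The key lemma is a localization statement: if $A\in{\cal S}(G)$ is positive semidefinite, $A{\bf x}={\bf 0}$, $B$ is a set of vertices on which ${\bf x}$ vanishes, and $W$ is the vertex set of a connected component of $G[V\setminus B]$, then the vector ${\bf y}$ obtained from ${\bf x}$ by zeroing every coordinate outside $W$ also lies in the null space of $A$; moreover $A[B\cup W]$ is positive semidefinite, belongs to ${\cal S}(G[B\cup W])$, and has the restriction of ${\bf y}$ to $B\cup W$ in its null space. To prove it, write $A$ in block form with row/column blocks indexed by $B$, $W$, and $V\setminus(B\cup W)$; the block joining $W$ to $V\setminus(B\cup W)$ is zero because no edge of $G$ leaves $W$ inside $G[V\setminus B]$. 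Reading off the $W$-block of $A{\bf x}={\bf 0}$ gives $A_W{\bf x}_W={\bf 0}$, so $A{\bf y}$ is supported on the $B$-block and $\,{\bf y}^{*}A{\bf y}={\bf x}_W^{*}A_W{\bf x}_W=0$. Here positive semidefiniteness enters: writing $A=R^{*}R$, the equation $\|R{\bf y}\|^2={\bf y}^{*}A{\bf y}=0$ forces $R{\bf y}={\bf 0}$ and hence $A{\bf y}=R^{*}R{\bf y}={\bf 0}$; in particular the $B$-block of $A{\bf y}$ vanishes as well. The claims about $A[B\cup W]$ are then immediate, since it is a principal submatrix of a positive semidefinite matrix whose off-diagonal support is precisely the edge set of the induced subgraph.

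With the lemma available, the component rule is handled as follows. When Black applies the color change rule on $G[B\cup W_i]$, apply the lemma with $W=W_i$: the vector ${\bf y}$ equal to ${\bf x}$ on $W_i$ and zero elsewhere satisfies $A[B\cup W_i]\big({\bf y}|_{B\cup W_i}\big)={\bf 0}$ with $A[B\cup W_i]\in{\cal S}(G[B\cup W_i])$. Since ${\bf y}$ agrees with ${\bf x}$ on $W_i$ and is zero on $B$, the invariant says ${\bf y}|_{B\cup W_i}$ vanishes on the black vertices of the induced subgraph, and the ordinary color change argument applied \emph{inside} $G[B\cup W_i]$ shows that every vertex this step forces (necessarily a vertex of $W_i$) again has ${\bf y}$-coordinate, hence ${\bf x}$-coordinate, equal to zero. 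Thus the invariant survives each component move, and an induction over the sequence of forces finishes the proof. I expect the block computation together with the implication ${\bf y}^{*}A{\bf y}=0\Rightarrow A{\bf y}={\bf 0}$ for positive semidefinite $A$ to be the only real obstacle; everything else is the bookkeeping already present in the proof of Proposition~\ref{prop:AIM}.
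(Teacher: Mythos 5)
Your proof is correct, but it takes a different route from the paper, which gives no direct argument for this theorem: it obtains it as the $q=0$ case of Theorem~\ref{thm:Zq}, whose proof is phrased as a strategy for White protecting the support of the null space and rests on the isotropic-subspace dimension bound of Theorem~\ref{thm:iso} (via Corollary~\ref{cor:iso}). Your key lemma --- zero out ${\bf x}$ outside one component $W$ of $G[V\setminus B]$, observe that the resulting ${\bf y}$ satisfies ${\bf y}^{*}A{\bf y}={\bf x}_W^{*}A_W{\bf x}_W=0$, and conclude $A{\bf y}={\bf 0}$ from $A=R^{*}R$ --- is exactly the statement that every isotropic vector of a positive semidefinite matrix is a null vector, i.e.\ the $\min\{p,q\}=0$ instance of Corollary~\ref{cor:iso}, proved elementarily rather than quoted. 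The block computation, the observation that $A[B\cup W]\in{\cal S}(G[B\cup W])$ with $\bigl({\bf y}|_{B\cup W}\bigr)$ in its null space, and the invariant-propagation bookkeeping are all sound; one also benefits here from the fact that for $q=0$ White has no real choice (Black announces a single component, which must be returned), so the adversarial strategy of the general proof collapses to your fixed-forcing-set contradiction in the style of Proposition~\ref{prop:AIM}. What your approach buys is a self-contained, elementary proof of the semidefinite case (essentially the original argument of \cite{AIM2}, which the paper cites in lieu of a proof); what the paper's approach buys is uniformity, since the isotropic-subspace machinery handles all $q$ at once, at the cost of invoking Theorem~\ref{thm:iso} and the two-player token-counting argument. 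No gaps.
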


This is a special case of Theorem~\ref{thm:Zq} and we will omit the proof here.  The original proof can be found in \cite{AIM2}.

\section{Zero forcing with $q$ negative eigenvalues}\label{sec:Zq}
The semidefinite zero forcing number gives an indication of how to generalize zero forcing.  Namely, we give Black the possibility of working with a smaller graph.  This leads us to the general $Z_q$-forcing game.

\bigskip\noindent\hfil\boxed{\parbox{0.97\textwidth}{
{\bf $Z_q$-Forcing Game} -- All the vertices of the graph $G$ are initially  colored white and there are two players, known as Black (who has a collection of tokens) and White.  Black will repeatedly apply one of the following three options until all vertices are colored black.
\begin{itemize}
\item Black can change any vertex from white to black, at the cost of a single token.
\item Black can apply the color change rule on the entire graph $G$.  This operations does not cost Black a token.
\item Let the vertices currently colored black be denoted by $B$, and $W_1,\ldots,W_k$ be the vertex sets of the connected components of $G[V\setminus B]$.  Black selects {\em at least}\/ $q+1$ of the $W_i$, announcing the set $S\subseteq\{W_1,\ldots,W_k\}$ (with $k\ge q+1$) to White.  Then White will select a nonempty subset $T\subseteq S$, say $T=\{W_{i_1},\ldots,W_{i_\ell}\}$ (with $\ell\ge 1$) and announces it back to Black.  Black can apply the color change rule on $G[B\cup W_{i_1}\cup\cdots\cup W_{i_\ell}]$.  This operation does not cost Black a token.
\end{itemize}
}}\hfil\bigskip

In this game White has the goal of making it as costly for Black as possible, or in other words, White is trying to delay the process of all the vertices from black.  The minimal number of tokens that Black must use to change all of the vertices from white to black, regardless of the play of White, is the zero forcing number for matrices with $q$ negative eigenvalues, denoted $Z_q(G)$.

\begin{example}
Consider the graph shown in Figure~\ref{fig:exa} on $10$ vertices.  A simple case analysis shows that $Z_1(G)>2$. (Black must always start by spending a token, and if the graph does not have a leaf then in the $Z_1$-forcing game Black must spend at least two tokens before anything can occur.  But by symmetry one can easily show that White can prevent Black from forcing more than a single vertex.)
\begin{figure}[hftb]
\centering
\subfloat[]{\includegraphics{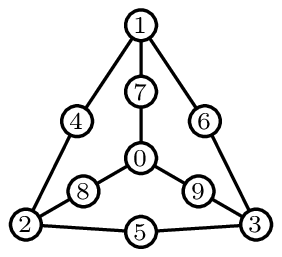}\label{fig:exa}}\hfil
\subfloat[]{\includegraphics{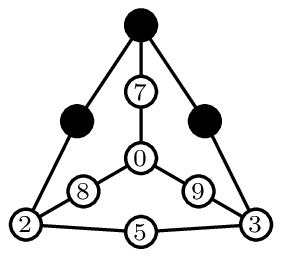}\label{fig:exb}}\hfil
\subfloat[]{\includegraphics{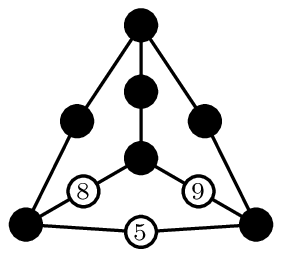}\label{fig:exd}}
\caption{An example of  a graph with $Z_1(G)=3$.}
\end{figure}
We now show that $Z_1(G)=3$.  In this case Black spends three tokens to color the vertices $1$, $4$ and $6$ black, as shown in Figure~\ref{fig:exb}.  Then by forcing on the whole graph the vertices $2$, $7$ and $3$ become black, as shown in Figure~\ref{fig:exd}.  There are now three connected components when the black vertices are removed, Black picks any two of them and declares them to White.  Whatever White returns can be forced and then using the color change rule on the entire graph will finish changing the remaining white vertices to black.
\end{example}

Unlike zero forcing and positive semidefinite zero forcing, the strategy for Black may no longer be to spend all the tokens up front.  In other words there are graphs where Black will vary the choice of where to spend tokens depending on the response of White.  So we do not have $Z_q$-forcing sets but $Z_q$-forcing strategies.

\begin{example}
Consider the graph shown in Figure~\ref{fig:ex2a} on $9$ vertices.  It will follow from Corollary~\ref{cor:Zq} along with the known properties of this graph that $Z_1(G)\ge 4$.  We will show a strategy for Black that uses $4$ tokens to make all the vertices of the graph black.
\begin{figure}[hftb]
\centering
\subfloat[]{\includegraphics{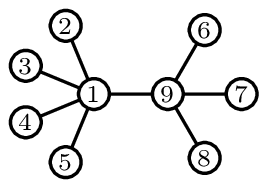}\label{fig:ex2a}}\hfil
\subfloat[]{\includegraphics{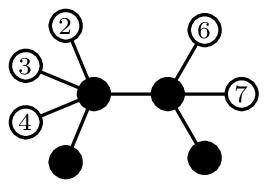}\label{fig:ex2b}}\hfil
\subfloat[]{\includegraphics{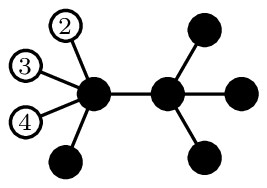}\label{fig:ex2c}}
\caption{An example of  a graph with $Z_1(G)=4$.}
\end{figure}
First, Black spends two tokens to color the vertices $5$ and $8$ black.  Since these are pendant vertices these then force the vertices $1$ and $9$ to black by using the color change rule on the whole graph.  Black now picks a vertex from $\{1,2,3\}$ and a vertex from $\{6,7\}$ and declares it to White, whatever is returned is then forced and this continues until one of the two sets has been completely colored black.  In the worst case scenario for Black, White will have only returned the vertices from $\{6,7\}$ in which case the graph is as shown in Figure~\ref{fig:ex2c}.  At this point Black can spend at most two tokens to get all but one of the remaining vertices to become black and the last vertex will then be switched to black by the color change rule.
\end{example}

If in the previous example Black had chosen to initially spend four tokens before using any free forcing, then at least two of $\{2,3,4,5\}$ or two of $\{6,7,8\}$ would not be black.  At this point White can now always block Black from coloring the two remaining vertices without spending a token (i.e., the color change rule on the whole graph cannot color the vertices since they share a common neighbor and White can always keep those two vertices white when Black declares some connected components).

This also shows that White should not always return all the subsets Black has declared even though intuitively it would seem that the more white vertices there are, the more difficult it should be for Black to apply the color change rule.  (The important point is that the intuition holds only when the white components are ``close''.)

For each graph $G$ and each value $q$ we have a parameter $Z_q(G)$.  These are nicely ordered in the following sense.

\begin{proposition}\label{prop:order}
For any graph we have
\[
Z_+(G)=Z_0(G)\le Z_1(G)\le Z_2(G)\le\cdots\le Z(G).
\]
\end{proposition}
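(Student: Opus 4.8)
My plan is to establish all the comparisons by matching up the three games move by move, the recurring point being that as $q$ increases Black's third option becomes strictly harder to invoke (he must announce more components), while White's options are unaffected. I would first dispose of $Z_q(G)\le Z(G)$: any play of the $Z_q$-Forcing Game in which Black never uses the third option is literally a play of the ordinary Zero Forcing Game (White is never asked to move), so Black can buy a zero forcing set of size $Z(G)$ and then repeatedly apply the color change rule on all of $G$ to win. Next, for $Z_q(G)\le Z_{q+1}(G)$, I would note that at any position --- determined by the current black set $B$ --- a legal third-option move of Black in the $Z_{q+1}$-Forcing Game announces a set $S$ of at least $q+2$ white components, which is in particular an announcement of at least $q+1$ components and hence legal in the $Z_q$-Forcing Game; the other two options and White's legal replies $\emptyset\ne T\subseteq S$ are the same in both games. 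Hence any play in which Black follows a $Z_{q+1}$-strategy uses only moves that are legal in the $Z_{q+1}$-Forcing Game, so that strategy can be used verbatim in the $Z_q$-Forcing Game and wins there against every White with the same number of tokens; taking the optimal one gives $Z_q(G)\le Z_{q+1}(G)$.

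The one inequality that needs an argument beyond comparing rule sets is $Z_0(G)=Z_+(G)$. For $Z_0(G)\le Z_+(G)$, Black imitates an optimal semidefinite strategy inside the $Z_0$-Forcing Game: wherever the semidefinite strategy applies the color change rule to $G[B\cup W_i]$, Black announces the singleton $S=\{W_i\}$ (legal since $q+1=1$), White can only return $T=\{W_i\}$, and Black obtains exactly the same force. For $Z_+(G)\le Z_0(G)$, I would fix an optimal $Z_0$-strategy for Black and run it against the particular White who always returns $T=S$; Black still wins with at most $Z_0(G)$ tokens, and every move in the resulting play is a token purchase, a color change on all of $G$, or a color change on $G[B\cup W_{i_1}\cup\cdots\cup W_{i_\ell}]$ for components $W_{i_j}$ of $G[V\setminus B]$. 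Since distinct components of $G[V\setminus B]$ are non-adjacent, any vertex forced by the color change on the union is already forced by the color change on the single component $G[B\cup W_{i_j}]$ that contains it; hence the effect of a union move is matched, or exceeded, by performing the semidefinite moves on $G[B\cup W_{i_1}]$, then on the resulting $G[B'\cup W_{i_2}]$, and so on --- each $W_{i_j}$ is still a component of the current graph since the earlier forces only blackened vertices of the earlier components, and a larger black set never hurts Black. Translating the whole play move by move shows Black wins the Semidefinite Zero Forcing Game with at most $Z_0(G)$ tokens, so $Z_+(G)\le Z_0(G)$.

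I expect the last simulation to be the only delicate point: one must check carefully that a single color change on a union of white components factors through the per-component color changes, that intermediate forcing leaves the still-unprocessed components intact as components, and that handing Black a larger black set can only help. Everything else reduces to the nesting of Black's move sets as $q$ varies, together with the observation that an unused option --- and an unused adversary --- cannot increase Black's cost.
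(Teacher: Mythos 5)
Your proof is correct and follows essentially the same route as the paper: the chain $Z_t(G)\le Z_s(G)$ for $t\le s$ comes from the nesting of Black's legal announcements, $Z_q(G)\le Z(G)$ from never invoking the third option, and $Z_0(G)=Z_+(G)$ from singleton announcements that force White's hand. The only place you go beyond the paper is the direction $Z_+(G)\le Z_0(G)$, where you verify that a force on a union of white components factors through per-component forces (and that intermediate forcing preserves the remaining components and that a larger black set cannot hurt Black); the paper's proof asserts the reduction to the semidefinite game without spelling this out, so your added care there is a genuine, if small, improvement rather than a deviation.
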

\begin{proof}
For $q=0$, Black can select $1$ component, and then White has no choice but to return that component, we can thus remove White from the game and reduce to the game for semidefinite zero forcing.

Suppose that $t\le s$.  Black can now use the strategy for the $Z_s$-forcing game that uses at most $Z_s(G)$ tokens in the $Z_t$-forcing game and force all the vertices to be black.  It follows that $Z_t(G)\le Z_s(G)$.

Finally, Black can choose to color all the vertices of a zero forcing set black, at a cost of $Z(G)$ tokens and then change all the remaining vertices  to black using the color change rule.  This shows that $Z_q(G)\le Z(G)$ for all $q$.  Alternatively, the role of White only comes into play if there are at least $q+2$ connected components in $G[V\setminus B]$.  When the number of connected components is always below $q$ then we reduce to the original zero forcing game.  In other words, we can think of the zero forcing number of a graph as $Z_\infty(G)$.
\end{proof}

On the other hand, the parameter $Z_q(G)$ can behave differently than the zero forcing and semidefinite zero forcing parameters.  For example, if $G\sqcup H$ is the disjoint union of $G$ and $H$, then it is easy to show that $Z(G\sqcup H)=Z(G)+Z(H)$ and $Z_+(G\sqcup H)=Z_+(G)+Z_+(H)$, i.e., since to force the union of graphs we must force each graph individually.  However, we have $Z_q(G\sqcup H)\le Z_q(G)+Z_q(H)$, and we usually have a strict inequality.  For example, consider $K_{1,p}\sqcup K_{1,q}$, with $p,q\ge 2$.  It is easy to show that $Z_1(K_{1,p})=p-1$ and $Z_1(K_{1,q})=q-1$.  But using a strategy similar to the one given in the second example we have 
\[
Z_1(K_{1,p}\sqcup K_{1,q})=\max\{p,q\}-1<p+q-2=Z_1(K_{1,p})+Z_1(K_{1,q}).
\]

As before, the parameter $Z_q(G)$ gives information about the maximum nullity for some subset of the matrices in ${\cal S}(G)$.

\begin{theorem}\label{thm:Zq}
The nullity of $A\in {\cal S}(G)$ when $A$ has exactly $q$ negative eigenvalues is at most $Z_q(G)$.
\end{theorem}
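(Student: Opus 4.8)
The plan is to prove the equivalent inequality $Z_q(G)\ge k$, where $k$ denotes the nullity of $A$, by handing the White player a strategy (dictated by $A$) that forces Black to spend at least $k$ tokens. The bookkeeping device is a subspace $U\subseteq\ker A$ of null vectors, all of which vanish on the currently black vertices, maintained under the invariant $\dim U\ge k-(\text{tokens Black has spent so far})$. Initially $U=\ker A$ with no black vertices, so the invariant holds; and if Black ever colored all of $G$ black while having spent fewer than $k$ tokens, then $U$ would be a nonzero subspace whose vectors vanish in every coordinate, which is impossible. So the proof reduces to checking that White can answer each of Black's three kinds of move while preserving the invariant, and then inducting on the number of moves.

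For the algebraic setup I would factor $A=C^{*}DC$ with $D=I_{p}\oplus(-I_{q})$ and $C$ a $(p+q)\times n$ matrix of full row rank, where $p$ and $q$ are the numbers of positive and negative eigenvalues of $A$; then $\ker A=\ker C$ and the columns ${\bf c}_{v}$ of $C$ satisfy ${\bf c}_{u}^{*}D\,{\bf c}_{v}=a_{uv}$, so ${\bf x}\in\ker A$ precisely when $\sum_{v}x_{v}{\bf c}_{v}={\bf 0}$. Black's two cheap moves are handled just as in Proposition~\ref{prop:AIM}: when Black spends a token on a vertex $v$, replace $U$ by $\{{\bf x}\in U:x_{v}=0\}$, whose dimension drops by at most one, matching the spent token; when Black applies the color change rule on all of $G$, then for each ${\bf x}\in U$ and each black vertex $v$ doing a force, $(A{\bf x})_{v}$ collapses to $a_{vw}x_{w}$, where $w$ is the unique white neighbor of $v$, which forces $x_{w}=0$, so $U$ is unchanged and still vanishes on the enlarged black set.

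The crux is the new move. Let $B$ be the black set, $W_{1},\dots,W_{m}$ the components of $G[V\setminus B]$, and $S$ the collection of at least $q+1$ components Black announces. For ${\bf x}\in U$ put ${\bf p}_{i}({\bf x})=\sum_{v\in W_{i}}x_{v}{\bf c}_{v}\in\mathbb{C}^{p+q}$. Since ${\bf x}$ vanishes on $B$ we have $\sum_{i}{\bf p}_{i}({\bf x})={\bf 0}$; since a null vector vanishing on $B$ restricts to a null vector of each principal submatrix $A[W_{i}]$ and vertices of distinct $W_{i}$ are non-adjacent, the ${\bf p}_{i}({\bf x})$ are pairwise $D$-orthogonal and each $D$-isotropic. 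Hence, for each fixed ${\bf x}$, the span of $\{{\bf p}_{i}({\bf x}):i\in S\}$ is a totally isotropic subspace for the nondegenerate form $D$ of signature $(p,q)$, so it has dimension at most $\min(p,q)\le q$; as $|S|\ge q+1$, these vectors are linearly dependent. Choosing ${\bf x}$ \emph{generic} in $U$ (so the linear matroid on $S$ given by the ${\bf p}_{i}({\bf x})$ is the one of maximal rank on every subset, a matroid of rank at most $q$), that matroid contains a circuit $T\subseteq S$, and White returns this $T$. For a circuit every dependence $\sum_{i\in T}c_{i}{\bf p}_{i}({\bf x})={\bf 0}$ has all $c_{i}\ne 0$, so the reweighted vector ${\bf x}'$ with $x'_{v}=c_{i}x_{v}$ for $v\in W_{i}$ ($i\in T$) and $x'_{v}=0$ otherwise lies in $\ker A$, vanishes on $B$, and restricts to a null vector of $A[B\cup\bigcup_{i\in T}W_{i}]$. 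Applying the color change argument inside $G[B\cup\bigcup_{i\in T}W_{i}]$ shows that every vertex Black can force there is one at which ${\bf x}'$ vanishes, hence (since ${\bf x}'$ and ${\bf x}$ have the same zero pattern on $\bigcup_{i\in T}W_{i}$ and both vanish on $B$) one at which ${\bf x}$ vanishes. Because this holds for all generic ${\bf x}\in U$ and ``vanishing on the forced set'' is a linear condition on $U$, it holds for every ${\bf x}\in U$, so $U$ survives unchanged and the invariant is preserved.

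I expect the genuine obstacle to be exactly the gap with the positive semidefinite case (Theorem~\ref{thm:Z+}): there every restriction of a null vector to a component is again a null vector, so White may safely return a single component, whereas for $q>0$ this can fail. The Witt-index bound repairs this by forcing a dependence among any $q+1$ components, but two subtleties remain: one must pass to a \emph{minimal} dependent set, so that all coefficients are nonzero and the reweighted restriction really lands in $\ker A$ with the right zero pattern; and one must make a single choice of $T$ work for the whole carried subspace $U$, not just one vector, which is what the genericity/Zariski-density step does. Finally it is worth stressing why White carries a subspace rather than one fixed null vector: $Z_{q}$-forcing is genuinely adaptive, so the set of vertices Black pays for is not determined in advance.
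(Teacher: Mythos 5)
Your proof is correct and follows essentially the same strategy as the paper's: White maintains the subspace of null vectors supported on the white vertices, protects the support of a generic element of that subspace, and answers the component move by finding a null vector of $A$ in the isotropic span of the componentwise restrictions, which must exist because $q+1$ exceeds $\min\{p,q\}$. Your Gram factorization $A=C^*DC$ with the Witt-index bound for $D$ is just a repackaging of the paper's appeal to the maximal-isotropic-subspace theorem (Corollary~\ref{cor:iso}), and your matroid circuit is the paper's ``return the components with $b_i\ne 0$'' made slightly more explicit.
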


We remark the proof we will give for Theorem~\ref{thm:Zq} also works to show that ``exactly $q$'' can be relaxed to ``at most $q$''.  Before we begin the proof we first need to introduce isotropic subspaces.

\begin{definition}
An \emph{isotropic subspace}\/ of a matrix is a subspace where $\mathbf{x}^*A\mathbf{x}=0$ for all $\mathbf{x}$ in the subspace.
\end{definition}

\begin{theorem}{(\cite[Theorem 1.5]{inner})}\label{thm:iso}
The maximum possible dimension of an isotropic subspace for a Hermitian matrix $A$ is $n-p-q+\min\{p,q\}$ where $p$ and $q$ are the number (counting multiplicity) of positive and negative eigenvalues respectively.
\end{theorem}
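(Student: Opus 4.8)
The plan is to establish that the maximum isotropic dimension equals $n-p-q+\min\{p,q\}$ by reducing $A$ to a canonical congruence representative and then proving matching upper and lower bounds. Write $z=n-p-q$ for the nullity, so the target quantity is $z+\min\{p,q\}$. The first step is to note that the maximum isotropic dimension depends only on the congruence class of $A$: if $P$ is invertible and $A'=P^{*}AP$, then $\mathbf{x}\mapsto P\mathbf{x}$ is a linear isomorphism carrying isotropic subspaces of $A'$ bijectively onto those of $A$ while preserving dimension, since $\mathbf{x}^{*}A'\mathbf{x}=(P\mathbf{x})^{*}A(P\mathbf{x})$. By Sylvester's law of inertia (equivalently, the spectral theorem followed by a diagonal rescaling) I may therefore assume $A=\mathrm{diag}(I_p,-I_q,0_z)$, so that in block coordinates $\mathbf{x}=(\mathbf{a},\mathbf{b},\mathbf{c})$ with $\mathbf{a}\in\mathbb{C}^{p}$, $\mathbf{b}\in\mathbb{C}^{q}$, $\mathbf{c}\in\mathbb{C}^{z}$ the form reads $\mathbf{x}^{*}A\mathbf{x}=\|\mathbf{a}\|^{2}-\|\mathbf{b}\|^{2}$. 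Negating $A$ leaves the set of isotropic subspaces unchanged but interchanges $p$ and $q$, so I may also assume $p\le q$, whence $\min\{p,q\}=p$.

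For the upper bound, let $U$ be any isotropic subspace and consider the projection $\sigma(\mathbf{a},\mathbf{b},\mathbf{c})=(\mathbf{a},\mathbf{c})$ from $U$ into $\mathbb{C}^{p}\oplus\mathbb{C}^{z}$, which discards the negative-definite block. If $\sigma(\mathbf{u})=\mathbf{0}$, then $\mathbf{u}=(\mathbf{0},\mathbf{b},\mathbf{0})$, and isotropy gives $0=\mathbf{u}^{*}A\mathbf{u}=-\|\mathbf{b}\|^{2}$, hence $\mathbf{b}=\mathbf{0}$ and $\mathbf{u}=\mathbf{0}$. Thus $\sigma$ is injective and $\dim U\le p+z=\min\{p,q\}+z$. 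For the lower bound I exhibit a subspace attaining this value: letting $\mathbf{e}_1,\ldots,\mathbf{e}_p$ and $\mathbf{f}_1,\ldots,\mathbf{f}_q$ denote the standard basis vectors of the positive and negative blocks, I take $W$ to be the span of the entire null block $\mathbb{C}^{z}$ together with the $p$ vectors $\mathbf{v}_i=\mathbf{e}_i+\mathbf{f}_i$, $1\le i\le p$. These $z+p$ vectors have disjoint supports in the positive and null coordinates, so they are independent and $\dim W=z+\min\{p,q\}$.

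It remains to verify that $W$ is isotropic, and this is the one place where the sesquilinear structure must be handled carefully. Writing $B(\mathbf{x},\mathbf{y})=\mathbf{x}^{*}A\mathbf{y}$ for the associated Hermitian form, the vanishing of the quadratic form $\mathbf{x}^{*}A\mathbf{x}$ on every vector of $W$ is equivalent, by polarization over $\mathbb{C}$, to the vanishing of $B$ on all pairs drawn from a spanning set, and I would check the latter directly. The null block is $B$-orthogonal to everything, and for the hyperbolic pairs $B(\mathbf{v}_i,\mathbf{v}_j)=B(\mathbf{e}_i,\mathbf{e}_j)+B(\mathbf{f}_i,\mathbf{f}_j)=\delta_{ij}-\delta_{ij}=0$, the cross terms $B(\mathbf{e}_i,\mathbf{f}_j)$ vanishing because the positive and negative blocks are $B$-orthogonal in the canonical form. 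Hence $B$, and therefore the quadratic form, vanishes on all of $W$, confirming the lower bound and completing the argument.

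The result is classical: $\min\{p,q\}$ is the Witt index of the nondegenerate part of the form, so I anticipate no serious obstacle. The only genuine care required is the bookkeeping just described --- keeping the conjugate-linearity of $B$ straight and using polarization to pass between the quadratic form appearing in the definition of an isotropic subspace and the sesquilinear form needed to certify that an entire spanning set generates an isotropic subspace. A secondary point worth stating explicitly is that both the congruence reduction and the $p\leftrightarrow q$ symmetry are legitimate precisely because each preserves the collection of isotropic subspaces together with their dimensions.
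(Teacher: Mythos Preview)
Your proof is correct and is the standard argument via Sylvester's law of inertia together with an explicit Witt-index construction. Note, however, that the paper does not actually prove this theorem: it is quoted without proof from \cite[Theorem~1.5]{inner} and used only as a black box (through Corollary~\ref{cor:iso}) in the proof of Theorem~\ref{thm:Zq}. So there is no ``paper's own proof'' to compare against; your argument supplies what the paper outsources to the reference.
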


\begin{corollary}\label{cor:iso}
For a Hermitian matrix $A$, let $R$ be an isotropic subspace of dimension more than $\min\{p,q\}$ where $p$ and $q$ are the number (counting multiplicity) of positive and negative eigenvalues respectively.  Then $R$ contains a vector in the null space.
\end{corollary}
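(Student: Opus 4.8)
The plan is to argue by contradiction: assume $R$ contains no nonzero null vector, i.e.\ $R\cap\ker A=\{\mathbf 0\}$, and derive a violation of the dimension bound in Theorem~\ref{thm:iso}. The key point to appreciate first is that Theorem~\ref{thm:iso} cannot be applied to $A$ directly, since an isotropic subspace for a singular $A$ may legitimately have dimension as large as $n-p-q+\min\{p,q\}$, which already exceeds $\min\{p,q\}$; so the argument must strip away the contribution of the null space before invoking Theorem~\ref{thm:iso}.

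First I would use that a Hermitian $A$ gives an orthogonal decomposition $\mathbb C^n=\ker A\oplus\operatorname{range}A$, and that the compression of $A$ to $\operatorname{range}A$ is an invertible Hermitian operator whose matrix, in an orthonormal eigenbasis of $\operatorname{range}A$, is a Hermitian matrix $B$ of size $p+q$ with exactly $p$ positive and $q$ negative eigenvalues and nullity $0$. Writing each $\mathbf x\in R$ as $\mathbf x=\mathbf x_0+\mathbf x_1$ with $\mathbf x_0\in\ker A$ and $\mathbf x_1\in\operatorname{range}A$, the assumption $R\cap\ker A=\{\mathbf 0\}$ makes the map $\mathbf x\mapsto\mathbf x_1$ injective on $R$, so its image $R_1\subseteq\operatorname{range}A$ satisfies $\dim R_1=\dim R$.

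Next I would check that $R_1$ is isotropic for the compression: since the cross terms vanish and $\mathbf x_0^*A\mathbf x_0=0$, we get $\mathbf x_1^*A\mathbf x_1=\mathbf x^*A\mathbf x=0$ for every $\mathbf x\in R$. Transported through the chosen orthonormal basis, $R_1$ becomes an isotropic subspace of $B$ of dimension $\dim R$. Applying Theorem~\ref{thm:iso} to $B$ (which has $p+q$ nonzero eigenvalues, $p$ positive and $q$ negative) yields $\dim R\le (p+q)-p-q+\min\{p,q\}=\min\{p,q\}$, contradicting the hypothesis $\dim R>\min\{p,q\}$. Hence $R\cap\ker A\ne\{\mathbf 0\}$, i.e.\ $R$ contains a nonzero vector in the null space.

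I expect the only genuine subtlety to be the bookkeeping that makes $R_1$ isotropic for $B$ rather than merely for $A$ — i.e.\ verifying that passing to $\operatorname{range}A$ in an orthonormal basis preserves the value of $\mathbf x^*A\mathbf x$ — together with the standard but essential fact that the compression of $A$ to $\operatorname{range}A$ has inertia exactly $(p,q,0)$; everything else is routine linear algebra. There is an equivalent formulation that avoids orthogonal complements by working in the quotient $\mathbb C^n/\ker A$ with the induced nondegenerate Hermitian form $\overline f(\bar{\mathbf x},\bar{\mathbf y})=\mathbf x^*A\mathbf y$, and I would mention this alternative, but I would carry out the orthogonal-projection version since it keeps all computations inside $\mathbb C^n$.
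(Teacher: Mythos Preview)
Your proof is correct and follows essentially the same approach as the paper: both argue that an isotropic subspace, once the null-space contribution is stripped away, has dimension at most $\min\{p,q\}$ by Theorem~\ref{thm:iso}, so any isotropic subspace of larger dimension must meet $\ker A$. The paper compresses this into a two-sentence ``dimension arguments'' sketch, whereas you make the projection to $\operatorname{range}A$ and the application of Theorem~\ref{thm:iso} to the nondegenerate compression explicit.
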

\begin{proof}
Any of the $n-p-q$ eigenvectors associated with eigenvalue $0$ can be in the isotropic subspace, beyond this, by Theorem~\ref{thm:iso} there are at most $\min\{p,q\}$ vectors making the isotropic subspace.  The result now follows by dimension arguments.
\end{proof}

%The second tool we will need is a framework for describing the game.
%
%\begin{definition}
%A \emph{$Z_q$-haven} for the graph $G$ is a subset $U$ of the vertices so that if $U$ are the only currently white vertices in the graph, there are no free forces for the Black player.
%\end{definition}
%
%In particular $Z_q$-havens are points in the game where in order for Black to color any more vertices, they would have to pay.
%
%\begin{definition}
%A \emph{level $0$ $Z_q$-haven} is any haven including the empty set.  For each $k\ge 1$ a \emph{level $k$ $Z_q$-haven} is a nonempty haven $U$ such that for each $v\in U$, when black does free forcing on the graph where $U-v$ are the only white vertices then White can have Black stop on a level $k-1$ haven.
%\end{definition}
%
%Note by this definition that a haven might be associated with it.  Generally, we are interested in the highest level that a haven is associated with.  This is because these levels are used to indicate how much Black has to spend starting from their current haven to get all vertices colored black, i.e., the haven corresponding to the empty set which is a level $0$ $Z_q$-haven.  In particular we have the following observation which follows from the definitions.
%
%\begin{observation}
%The haven $V$ (i.e., the initially all white vertex configuration at the start of the game) is a level $Z_q(G)$ $Z_q$-haven but is not a level $(Z_q(G)+1)$ $Z_q$-haven.
%\end{observation}

\begin{proof}[Proof of Theorem~\ref{thm:Zq}]
Given a matrix $A$ with $q$ negative eigenvalues and multiplicity of $m$ for the eigenvalue $0$, we will use the matrix to produce a strategy for White in the $Z_q$-forcing game.  This strategy can be used to show that starting from $V$ that black will have to spend at least $m$ times to force all vertices to be black, showing that $m\le Z_q(G)$, as desired.

The strategy will be as follows: After every time Black spends a token then White will look at the space $N$ which is the intersection of the null space and the span of vectors with nonzero entries only for the white vertices.  Let $T\subseteq V$ be the support of $N$, i.e., the vertices for which some vector in $N$ is nonzero for that vertex.  Then by general position there is some null vector $\mathbf{x}\in N$ whose support is $T$.  We now show that White can keep black from using the color change rule to switch the color for any vertex in $T$; or in other words, White can protect the support of $N$.

Recall from the discussion following the color change rule in Section~\ref{sec:Z} that the only time a vertex will change from white to black by the rule is if the entry in the null space vector ${\bf x}$ is $0$.  So when applying the color change rule to the entire graph we cannot change the color for anything in the support of ${\bf x}$, i.e., anything in the support of $T$.

So now consider the option where White is involved.  Suppose that $B$ are the vertices colored black and $W_1,W_2,\ldots,W_k$ are the vertices of the maximally connected components of $G[V\setminus B]$.  Then by appropriate relabeling we can assume that
\[
A=\left(
\begin{array}{c|c|c|c|c|c}
A_1&O&O&\cdots&O&B_1^*\\ \hline
O&A_2&O&\cdots&O&B_2^*\\ \hline
O&O&A_3&\cdots&O&B_3^*\\ \hline
\vdots&\vdots&\vdots&\ddots&\vdots&\vdots\\ \hline
O&O&O&\cdots&A_k&B_k^*\\ \hline
B_1&B_2&B_3&\cdots&B_k&C
\end{array}
\right)
\qquad
\mbox{and}\qquad
{\bf x}=
\left(
\begin{array}{c}
{\bf x}_1\\
{\bf x}_2\\
{\bf x}_3\\
\vdots\\
{\bf x}_k\\
{\bf 0}
\end{array}
\right)\,.
\]
Where $A_i$ is the submatrix on $G[W_i]$.  Then for any index $i$ using that $A{\bf x}={\bf 0}$ we have $A_i{\bf x}_i={\bf 0}$.  Let $\widehat{{\bf x}}_i$ be the vector ${\bf x}$ restricted to ${\bf x}_i$ (all the other terms are zeroed out).

If ${\bf y}=\sum a_i\widehat{{\bf x}}_i$, we have
\[
{\bf y}^*A{\bf y}=
{\bf y}^*\left(\begin{array}{c}\vdots\\a_iA_i{\bf x}_i\\\vdots\\\sum a_iB_i{\bf x}_i\end{array}\right)=\left(\begin{array}{cccc} \cdots &a_i{\bf x}_i&\cdots&{\bf 0}\end{array}\right)\left(\begin{array}{c}\vdots\\{\bf 0}\\\vdots\\\sum a_iB_i{\bf x}_i\end{array}\right)=0.
\]
This shows the span of the vectors $\widehat{{\bf x}}_1,\widehat{{\bf x}}_2,\ldots,\widehat{{\bf x}}_k$ form an isotropic subspace of $A$.  Black has selected at least $q+1>\min\{p,q\}$ of the $W_i$, and each $W_i$ is associated with $\widehat{{\bf x}}_i$.

If any of the $\widehat{{\bf x}}_i=\mathbf{0}$ for the corresponding components that Black gave to White, then White will return that single component to Black since then any forcing on that component will not impact the support of $\mathbf{x}$, i.e., the support of $T$.

Now suppose none of the $\widehat{{\bf x}}_i$ are $\mathbf{0}$ for the components Black gives to White.  Then by Corollary~\ref{cor:iso} there is a nontrivial null vector in the subspace spanned by the $\widehat{{\bf x}}_i$.  In particular, there is some null vector ${\bf z}=\sum b_i\widehat{{\bf x}}_i$.  White returns all of the $W_i$ components for which $b_i\ne 0$.

By the same argument as above, any application of the color change rule that Black is able to apply in this returned set cannot impact a nonzero entry of ${\bf z}$.  But the nonzero entries of ${\bf z}$ are precisely the nonzero entries of ${\bf x}$ restricted to the sets that were returned.  So as before Black cannot change the color for a vertex corresponding to a nonzero entry of ${\bf x}$, and so cannot force a vertex in $T$.

In both operations of applying the color change rule we see that Black cannot change the color of a vertex in $T$.  In particular, the only way that Black has to reduce the size of $T$ is to spend a token.  For each token spent this adds one constraint to the set of null vectors that still have support on the white vertices, i.e., it reduces the dimension of the new $N$ by at most one.  Starting with the set of all vertices we have that the dimension of the null space is $m$.  Every time that Black spends the dimension of the null space vectors that are still in the support of the white vertices reduces by at most one, and the process cannot stop until this dimension is $0$ (i.e., the only vector when all vertices are black is ${\bf 0}$).  Therefore Black has to spend a token at least $m$ times.
\end{proof}

The proof shows that a matrix can give a strategy for White to play that forces Black to spend at least the size of the maximum nullity.  However, White does not need to base the strategy off of a matrix and so in general this bound will not be tight.

\section{The parameter $\widehat{Z}_q(G)$}\label{sec:hat}
The parameter $Z_q(G)$ works well for small graphs.  For example on all but one tree up through $10$ vertices we can use $Z_q(G)$ to get tight bounds for the nullity of a matrix associated with the tree which has $q$ negative eigenvalues.  The one exception is the tree $T$ shown in Figure~\ref{fig:BFtree}, sometimes referred to in the literature as the Barioli-Fallat Tree.

\begin{figure}[hftb]
\centering
\includegraphics{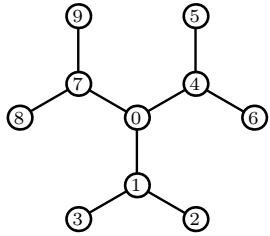}
\caption{The Barioli-Fallat Tree}
\label{fig:BFtree}
\end{figure}

For this tree the maximum nullity for a matrix with $1$ negative eigenvalue is $2$ (a method that computes all possible numbers of positive and negative eigenvalues for a matrix associated with a tree is given in \cite{cut}).  However, we have $Z_1(T)=3$.  It is easy to see that a strategy for Black is to spend tokens on $2,5,8$ and then force to get $1,4,7$; Black then hands White the sets $\{3\}$ and $\{6\}$; whatever is returned is forced and forcing on the whole graph gives the rest.  On the other hand if Black only spends two tokens then simple case analysis will check that one of the pairs $\{2,3\}$, $\{5,6\}$ or $\{7,8\}$ will both be white.  At this point White can now protect that pair and so Black cannot get all the vertices to be black with just $2$ tokens.

Recently an improvement for zero forcing was introduced by Barioli et al.\ \cite{zhat} and is denoted by $\widehat{Z}(G)$.  This can be extended to give an improved $\widehat{Z}_q(G)$, which correctly gives the correct bounds for the Barioli-Fallat tree and improved bounds for many other graphs.

The approach is to introduce auxiliary graphs $\widehat{G}$ which is the same as $G$ except each vertex is placed into one of two groups: looped (which can force itself when it is white with all black neighbors), and unlooped (which if incident to a single white vertex can force that vertex even if itself is white).  The game is unchanged except for a modified color change rule.

\begin{definition}
The \emph{color change rule for a graph with loops and unloops} and vertices painted black and white gives that any vertex with exactly one white vertex as a neighbor (and arbitrarily many black vertices), the unique adjacent white vertex changes color from white to black.
\end{definition}

In particular, we have that white vertices are allowed to force with the convention that a vertex with a loop has itself as a neighbor and a vertex with an unloop does not have itself as a neighbor.  We define 
\[
\widehat{Z}_q(G)=\max_{\widehat{G}\in{\cal G}}Z_q(\widehat{G})
\]
where the maximum runs over the set of all possible auxiliary graphs ${\cal G}$.

The idea behind this is that we also give restrictions to the diagonal entries of the matrix in ${\cal S}(G)$.  In other words we are specifying the zero/nonzero pattern on the diagonal by putting loops at nonzero entries on the diagonal and unloops at zero entries on the diagonal.  We are taking the maximum so that we handle every possible combination of the diagonal.

In theory this is computationally prohibitive because we have to run over all possible ways to loop/unloop the vertices.  However in practice we only need to find a set of situations which cover all possibilities.

Consider again the Barioli-Fallat Tree and let us try to compute $\widehat{Z}_1(T)$.  Suppose that there was a loop at $2$.  Now consider the following strategy for Black.  Black spends at $6$ and $8$; forcing then changes $4$ and $7$ to black; Black now hands White $\{5\}$ and $\{9\}$; whatever is returned is forced and forcing on the entire graph gets us to only the vertices $2$ and $3$ as white; $2$ is looped so it now forces itself to black; forcing then gives $3$.  In this case Black is able to color all the vertices black using only two tokens.  By symmetry the same strategy works if there is any loop at a leaf.

Now consider a strategy for Black when all of the leaves are unlooped.  Without spending Black has that $1,4,7$ are black by the unlooped leaves;  Black hands White $\{2\}$ and $\{6\}$ and whatever is returned is forced, without loss of generality let us suppose that only $\{2\}$ is returned; Black now hands White $\{6\}$ and $\{8\}$ and whatever is returned is forced, without loss of generality let us suppose that only $\{6\}$ is returned; Black spends on $0$ and $8$; the remaining vertices are then forced.

All possibilities of being looped and unlooped falls into one of these two cases and in each case Black only needed to spend at most $2$, showing that $\widehat{Z}_1(T)=2$ (equality follows from what is known about the possible eigenvalues of $T$).

Since Black does not have to use the looped and unlooped vertices, i.e., only lets black vertices force, we have that $\widehat{Z}_q(G)\le Z_q(G)$.  So $\widehat{Z}_q(G)$ is a better bound.  For example for trees the parameter $\widehat{Z}_q(G)$ correctly gives the right nullity for all but one tree up through $16$ vertices.   The one exception is a graph on $16$ vertices related to the Barioli-Fallat Tree shown in Figure~\ref{fig:BFtree16}.  This demonstrates that there is still room for improvement in bounding the nullity of matrices associated with a graph.

\begin{figure}[htb]
\centering
\includegraphics{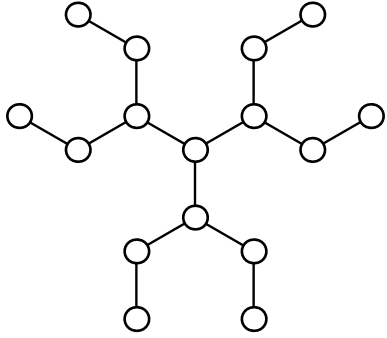}
\caption{The Extended Barioli-Fallat Tree}
\label{fig:BFtree16}
\end{figure}

\section{Finding lower bounds for inertia sets}\label{sec:final}

A primary motivation for the parameters $Z_q(G)$ was to provide lower bounds for inertia sets.  Recall that the inertia set of a graph, ${\cal I}(G)$, is the set of all possible $(p,q)$, where $p$ is the number of positive eigenvalues and $q$ is the number of negative eigenvalues, for matrices in the set ${\cal S}(G)$.  To give upper bounds it suffices to produce matrices, lower bounds are more difficult in that we must show that some matrix is not possible.

Inertia sets satisfy some simple properties.  For example, $A\in {\cal S}(G)$ if and only if $-A\in{\cal S}(G)$, so we have $(p,q)\in{\cal I}(G)$ if and only if $(q,p)\in{\cal I}(G)$.  In other words the inertia sets have symmetry across the line $y=x$.  Further, inertia sets do not have ``holes''.  This is a consequence of the following ``Northeast Lemma'', which says that if a point is in the inertia set then everything above and to the right (up to the dimension constraint) is also in the set.

\begin{proposition}[Barrett, Hall, Loewy\ \cite{cut}]\label{prop:ne}
If $(p,q)\in{\cal I}(G)$ then $(p+s,q+t)\in{\cal I}(G)$ where $s$ and $t$ are nonnegative integers and $(p+s)+(q+t)\le n$.
\end{proposition}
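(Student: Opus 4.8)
The plan is to reduce the statement to a single ``unit step'' and then iterate. Concretely, it suffices to prove the following: if $(p,q)\in{\cal I}(G)$ and $p+q<n$, then both $(p+1,q)$ and $(p,q+1)$ belong to ${\cal I}(G)$. Granting this, the proposition follows by induction on $s+t$: when $s+t\ge 1$, say $s\ge 1$, the pair $(p+s-1,q+t)$ lies in ${\cal I}(G)$ by the inductive hypothesis (its coordinates sum to $(p+s)+(q+t)-1\le n-1<n$), and one further unit step in the first coordinate produces $(p+s,q+t)$; the case $t\ge 1$ is symmetric.

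For the unit step, fix $A\in{\cal S}(G)$ with $p$ positive, $q$ negative, and $m=n-p-q\ge 1$ zero eigenvalues. Pick any nonzero vector $\mathbf{x}$ in the null space of $A$ and a vertex $v$ with $x_v\ne 0$; since $A$ is Hermitian this says exactly that $\mathbf{e}_v$ (the standard basis vector at $v$) does \emph{not}\/ lie in $\operatorname{col}(A)=(\ker A)^\perp$. Set $A_t:=A+t\,\mathbf{e}_v\mathbf{e}_v^*$. This alters only the $(v,v)$ diagonal entry, so $A_t$ is Hermitian and $A_t\in{\cal S}(G)$ for \emph{every}\/ real $t$: the off-diagonal zero/nonzero pattern is untouched and the diagonal carries no constraint. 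The key observation is that for every $t\ne 0$,
\[
\ker A_t=\ker A\cap\mathbf{e}_v^\perp .
\]
Indeed, if $A_t\mathbf{w}=\mathbf{0}$ then $A\mathbf{w}=-t\,w_v\,\mathbf{e}_v$; the left-hand side lies in $\operatorname{col}(A)$ and $\mathbf{e}_v$ does not, so $w_v=0$ and hence $A\mathbf{w}=\mathbf{0}$, while the reverse containment is immediate. Because $\mathbf{x}\in\ker A$ has $x_v\ne 0$, the coordinate functional $\mathbf{w}\mapsto w_v$ is nonzero on $\ker A$, so the displayed intersection has dimension $m-1$; thus $A_t$ has nullity exactly $m-1$ for all $t\ne 0$.

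It remains to see where the eigenvalue that left $0$ goes. Since $\mathbf{e}_v\mathbf{e}_v^*$ is positive semidefinite, Weyl's inequalities give $\lambda_i(A_t)\ge\lambda_i(A)$ for all $i$ whenever $t>0$, so none of the $p+m$ nonnegative eigenvalues of $A$ can become negative; hence $A_t$ has at most $q$ negative eigenvalues, and, by continuity of eigenvalues, exactly $q$ once $t>0$ is small enough. Together with the nullity count $m-1$ this forces $A_t$ to have inertia $(p+1,q,m-1)$ for all sufficiently small $t>0$, so $(p+1,q)\in{\cal I}(G)$. Replacing $t\,\mathbf{e}_v\mathbf{e}_v^*$ by $-t\,\mathbf{e}_v\mathbf{e}_v^*$ (equivalently, applying the case just proved to $-A\in{\cal S}(G)$ and using the reflection $(p,q)\mapsto(q,p)$ of ${\cal I}(G)$) gives $(p,q+1)\in{\cal I}(G)$ by the identical argument with all inequalities reversed. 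I expect no real obstacle here; the only delicate point is the exact nullity bookkeeping for $A_t$ --- confirming that a rank-one diagonal bump lowers the multiplicity of the eigenvalue $0$ by exactly one and in the intended direction --- which is precisely what the kernel identity above together with the semidefiniteness of the bump deliver.
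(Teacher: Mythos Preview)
The paper does not actually prove this proposition; it is quoted from Barrett, Hall and Loewy \cite{cut} and stated without argument, so there is no in-paper proof to compare against.

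Your argument is correct. The reduction to unit steps is clean, and the heart of the matter --- perturbing a single diagonal entry by $t\,\mathbf{e}_v\mathbf{e}_v^*$ at a vertex $v$ in the support of some null vector --- is exactly the right move. The kernel identity $\ker A_t=\ker A\cap\mathbf{e}_v^\perp$ for $t\ne 0$ is the key computation and you verify it properly via $\operatorname{col}(A)=(\ker A)^\perp$; this holds verbatim in the Hermitian setting used in the paper. One small sharpening: the continuity step is unnecessary. The rank-one interlacing inequalities give $\lambda_q(A)\le\lambda_q(A_t)\le\lambda_{q+1}(A)=0$ and $0=\lambda_{q+m}(A)\le\lambda_{q+m}(A_t)$, while the indices $q+1,\dots,q+m-1$ are pinned at $0$ by interlacing; since you have already shown $\operatorname{null}(A_t)=m-1$, both $\lambda_q(A_t)$ and $\lambda_{q+m}(A_t)$ must be nonzero, forcing inertia $(p+1,q)$ for \emph{every} $t>0$, not just small $t$. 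Either way the conclusion stands.
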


This gives us the ``Southwest Corollary''.

\begin{corollary}
If $(p,q)\not\in{\cal I}(G)$ and $p+q<n$ then $(s,t)\not\in{\cal I}(G)$ where $0\le s\le p$ and $0\le t\le q$.
\end{corollary}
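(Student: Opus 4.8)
The plan is to obtain this as nothing more than the contrapositive of the Northeast Lemma (Proposition~\ref{prop:ne}), together with a small amount of reindexing. First I would argue by contradiction: suppose $(p,q)\notin{\cal I}(G)$ with $p+q<n$, and yet there is a pair $(s,t)\in{\cal I}(G)$ with $0\le s\le p$ and $0\le t\le q$. The quantities $p-s$ and $q-t$ are then nonnegative integers, and I would apply Proposition~\ref{prop:ne} at the point $(s,t)$ using these as the increments.

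The one thing to verify is the dimension side condition appearing in Proposition~\ref{prop:ne}, namely $\bigl(s+(p-s)\bigr)+\bigl(t+(q-t)\bigr)\le n$. But this sum equals $p+q$, and by hypothesis $p+q<n$, so the condition holds with room to spare. Proposition~\ref{prop:ne} then gives $(p,q)=\bigl(s+(p-s),\,t+(q-t)\bigr)\in{\cal I}(G)$, contradicting the assumption that $(p,q)\notin{\cal I}(G)$. Hence no such $(s,t)$ can exist, which is exactly the claim.

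I do not anticipate a real obstacle here: the statement is a purely formal consequence of the ``northeast'' monotonicity of ${\cal I}(G)$, and the only points that need attention are (i) that the increments $p-s$ and $q-t$ are nonnegative integers, which is immediate from $s\le p$, $t\le q$, and integrality, and (ii) that the strict hypothesis $p+q<n$ is more than enough to license the single invocation of Proposition~\ref{prop:ne} at the target point $(p,q)$. It is also worth remarking in passing that one never needs $s+t\le n$ as a separate assumption, since it follows from $s\le p$, $t\le q$, and $p+q<n$; this keeps the hypotheses of the corollary minimal.
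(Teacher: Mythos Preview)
Your proposal is correct and matches the paper's intent: the paper does not even write out a proof, presenting the statement as an immediate ``Southwest Corollary'' of Proposition~\ref{prop:ne}, and your contrapositive argument is exactly the one-line justification implicit in that presentation.
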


The way we will use $Z_q(G)$ to help give lower bounds for the inertia is by use of the following observation. 

\begin{observation}\label{cor:Zq}
We have $(n-q-Z_q(G)-1,q)\not\in{\cal I}(G)$.
\end{observation}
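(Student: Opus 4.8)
The plan is to derive Observation~\ref{cor:Zq} directly from Theorem~\ref{thm:Zq} by a counting argument on the inertia triple $(p,q,m)$, where $m = n-p-q$ is the nullity. Suppose for contradiction that $(n-q-Z_q(G)-1,\, q) \in {\cal I}(G)$, so there is a matrix $A \in {\cal S}(G)$ with exactly $q$ negative eigenvalues and exactly $p = n-q-Z_q(G)-1$ positive eigenvalues. The first step is to compute the nullity of this matrix: $m = n - p - q = n - (n-q-Z_q(G)-1) - q = Z_q(G)+1$. So $A$ would be a matrix in ${\cal S}(G)$ with exactly $q$ negative eigenvalues whose nullity is $Z_q(G)+1 > Z_q(G)$.

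The second step is to invoke Theorem~\ref{thm:Zq}, which asserts that any $A \in {\cal S}(G)$ with exactly $q$ negative eigenvalues has nullity at most $Z_q(G)$. This contradicts the computation above, so no such matrix exists, i.e. $(n-q-Z_q(G)-1,\, q) \not\in {\cal I}(G)$. That completes the argument.

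There is essentially no obstacle here: the entire content is packed into Theorem~\ref{thm:Zq}, and the observation is just the arithmetic translation of ``nullity $\le Z_q(G)$'' into the language of inertia pairs via the identity $p + q + m = n$. The only thing to be mildly careful about is the edge case where the claimed pair would have a negative first coordinate (when $Z_q(G) \ge n-q-1$); in that range the statement is vacuously true since $(p,q)$ with $p<0$ is never a legitimate inertia pair, so the claim holds trivially and we may assume $p \ge 0$ throughout the contradiction argument. One could also remark that, combined with the Southwest Corollary, this immediately rules out all pairs $(s,q)$ with $s \le n-q-Z_q(G)-1$, which is the form in which the bound is actually used to carve out lower bounds for ${\cal I}(G)$.
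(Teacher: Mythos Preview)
Your argument is correct and matches the paper's own justification: both derive the observation directly from Theorem~\ref{thm:Zq} via the identity $p+q+m=n$, concluding that $p\ge n-q-Z_q(G)$ whenever there are $q$ negative eigenvalues. The paper states this as a direct inequality rather than by contradiction, but the content is identical.
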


This follows since $p=|G|-q-m$ where $m$ is the nullity of the matrix, since $Z_q(G)$ is an upper bound on the nullity we can conclude that $p\ge |G|-q-Z_q(G)$.  This shows that if we have $q$ negative eigenvalues then we cannot have $|G|-q-Z_q(G)-1$ positive eigenvalues, i.e., the point is not in the inertia set of the graph.

We can now apply the Southwest Corollary to the following set to give a lower envelope for the points which are not in ${\cal I}(G)$:
\[
\{(n-q-Z_q(G)-1,q),(q,n-q-Z_1(G)-1):0\le q\le n\}.
\]

As an example let us consider Desargues Graph on 20 vertices, shown in Figure~\ref{fig:des}( with some forcing sets marked).  

\begin{figure}[ht]
\centering
\subfloat[$Z$-forcing set]{\includegraphics[scale=0.85]{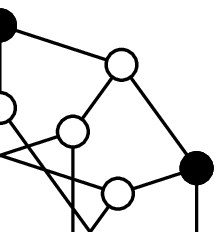}}\hfil
\subfloat[$Z_1$-forcing set]{\includegraphics[scale=0.85]{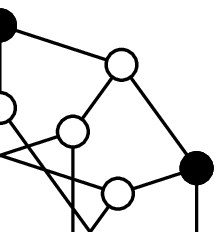}}\hfil
\subfloat[$Z_+$-forcing set]{\includegraphics[scale=0.85]{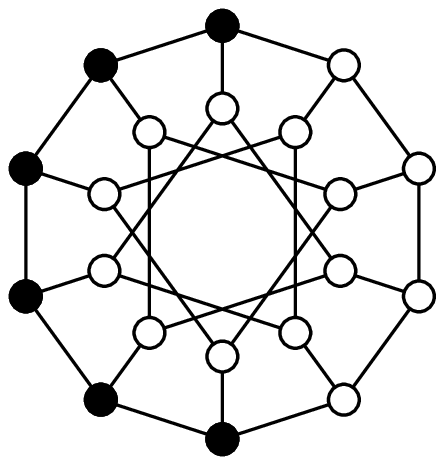}}
\caption{Several forcing sets for various games on Desargues graph.}
\label{fig:des}
\end{figure}

We can use the various parameters we have discussed with differing conditions to find lower bounds for the inertia set of this graph.  These are shown in Figure~\ref{fig:desbnds} and we summarize them below.  We note that since Desargues Graph has a $K_6$ minor that all points with $15\le p+q\le 20$ are in the Inertia set, so we will only look at what happens for $p+q\le 14$.  The circle will indicate a point that is \emph{possibly} in the inertia based on the bounds that a certain tool yields.

\begin{figure}[ht]
\centering
\subfloat[Using $Z$]{\includegraphics[scale=0.6]{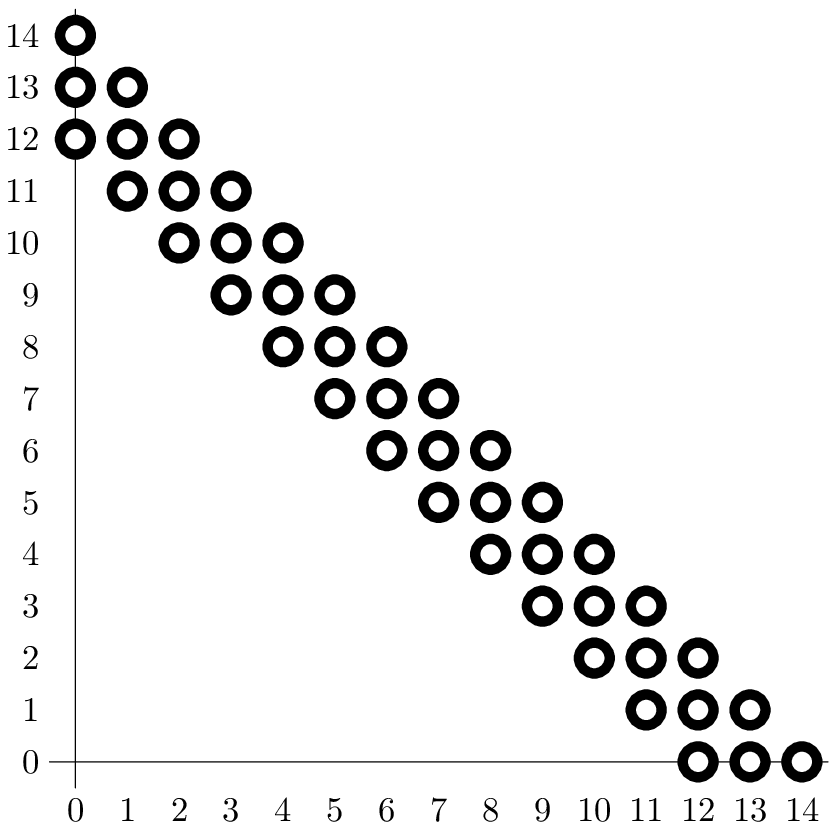}}\hfil
\subfloat[Using $Z$ and $Z_+$]{\includegraphics[scale=0.6]{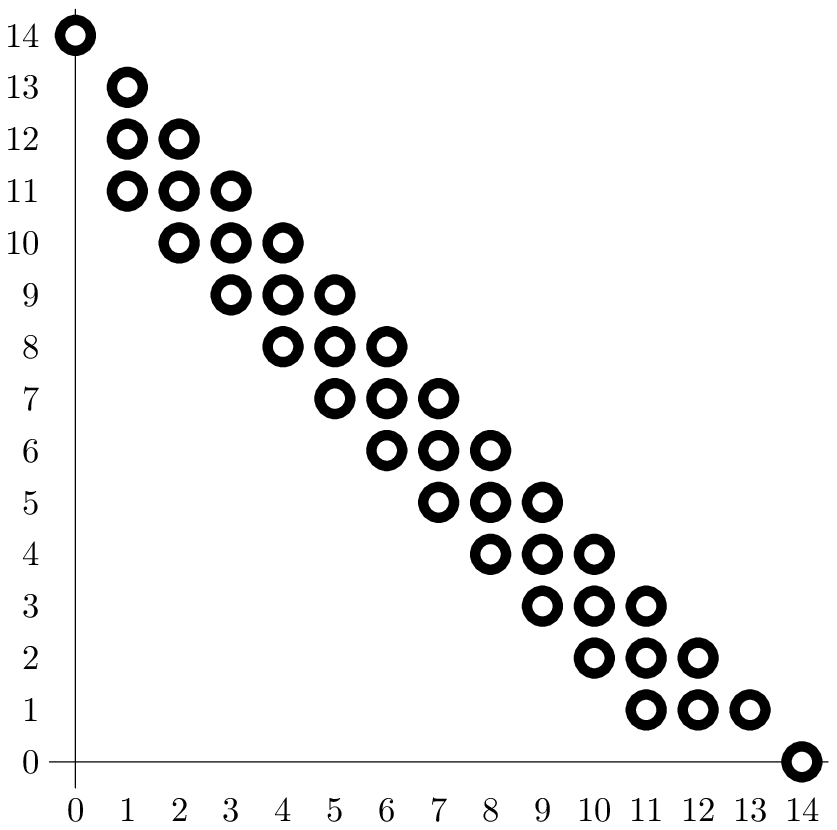}}\hfil
\subfloat[$Z_q$ with spending up front]{\includegraphics[scale=0.6]{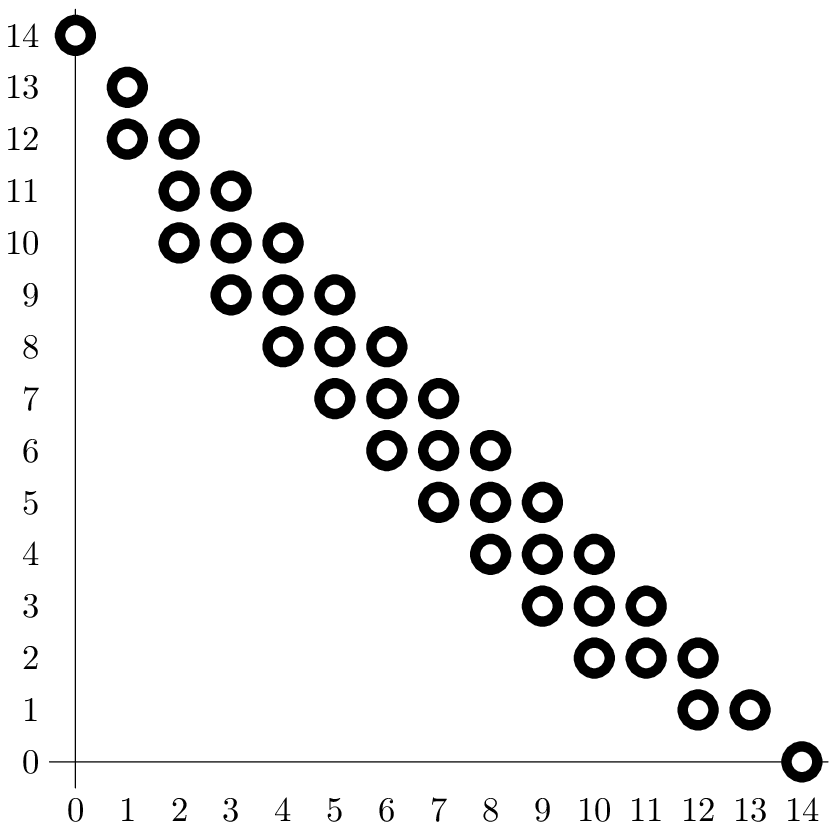}}\hfil

\subfloat[Using normal $Z_q$]{\includegraphics[scale=0.6]{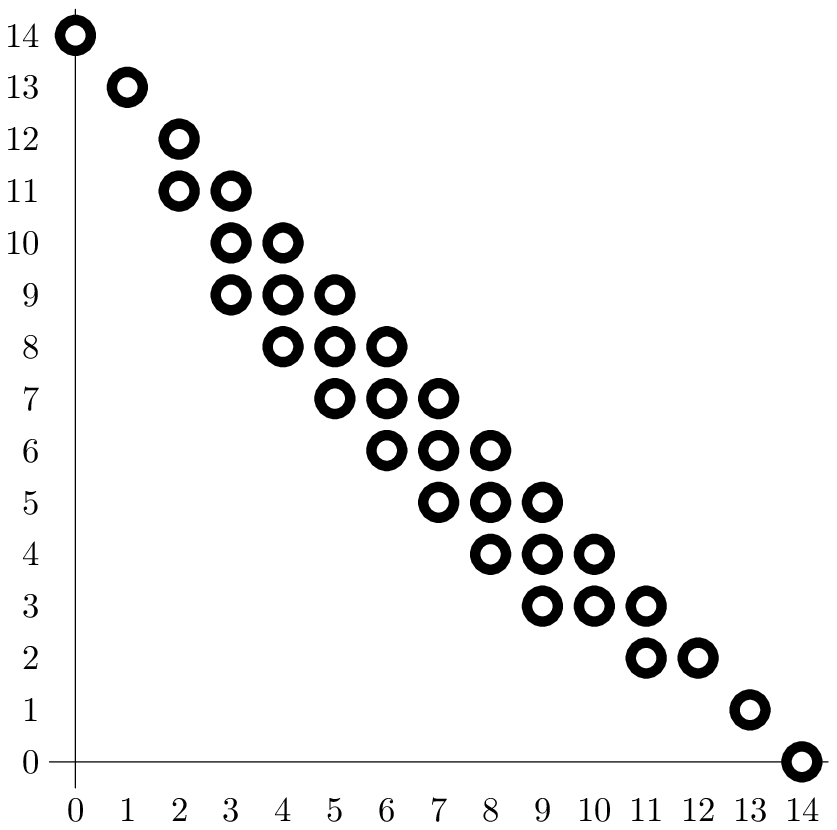}}\hfil
\subfloat[Using $\widehat{Z}_q$]{\includegraphics[scale=0.6]{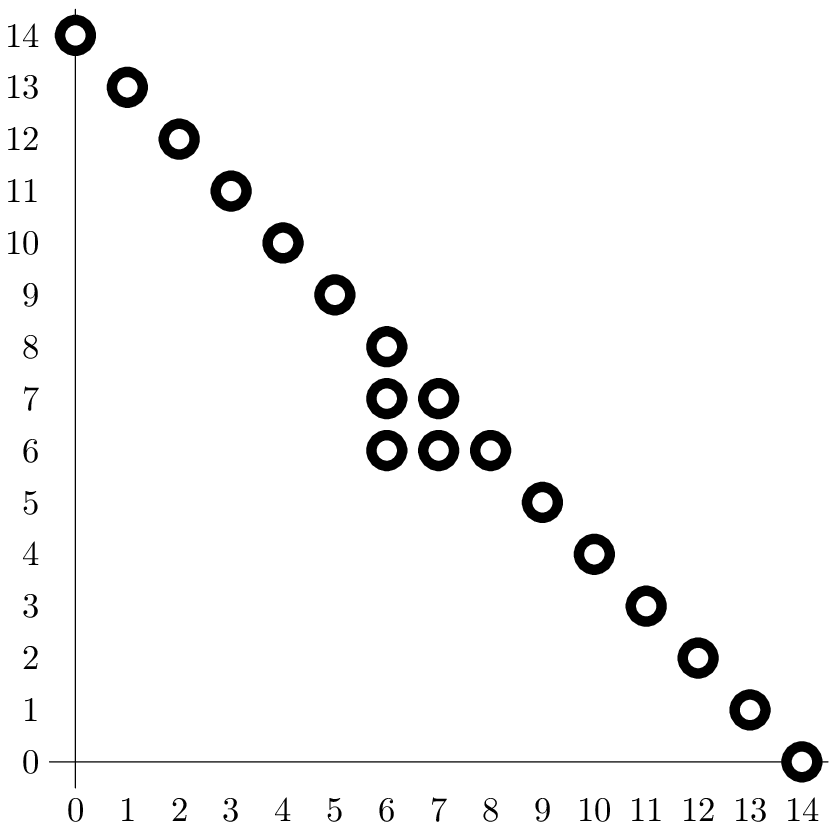}}\hfil
\caption{Lower bounds for the inertia set of Desargues graph}
\label{fig:desbnds}
\end{figure}

\begin{itemize}
\item[(a)] By Proposition~\ref{prop:AIM} we have that the maximum nullity of a matrix associated with the graph is bounded by $Z(G)$.  For Desargues graph $Z(G)=8$ and a forcing set is shown in Figure~\ref{fig:des}a.  This shows that we must have $p+q\ge 12$ giving us the set shown in Figure~\ref{fig:desbnds}a.
\item[(b)] In addition to the work done in (a) we have that by Theorem~\ref{thm:Z+} that the maximum nullity of a positive semidefinite matrix (i.e., $q=0$) is bounded by $Z_+(G)$.  For Desargues graph $Z_+(G)=6$ and a forcing set is shown in Figure~\ref{fig:des}c.  This shows that $(12,0)$ and $(13,0)$ cannot be in the inertia set, giving us the set shown in Figure~\ref{fig:desbnds}b.
\item[(c)] When we work with $Z_q$ we can insist that Black must always spend up front before forcing.  This is a bad choice on Black's part in general, but we can sometimes get useful information.  In the case of Desargues graph there is a set on $7$ vertices shown in Figure~\ref{fig:des}b which if Black initially spends on these vertices, then Black can win in the $Z_1$ game.  This shows that $(11,1)$ is not a point in the inertia set, giving us the set shown in Figure~\ref{fig:desbnds}c.
\item[(d)] When we use the definition for the $Z_q$ game (i.e., Black can save tokens to spend later), then we have
\[
Z_0(G)=Z_1(G)=6<Z_2(G)=7<Z_3=\cdots=8.
\]
This gives the set shown in Figure~\ref{fig:desbnds}d.
\item[(e)] When we apply $\widehat{Z}_q$ to Desargues graph we see a remarkable improvement.  In particular, we have
\[
Z_0(G)=\cdots=Z_5(G)=6<Z_6(G)=\cdots=8.
\]
This gives the set shown in Figure~\ref{fig:desbnds}e.
\end{itemize}

We have now computed lower bounds, the best coming from $\widehat{Z}_q$.  We have not yet determined the inertia set for Desargues graph, and it is not completely known.  However it is known that $(6,6)$ and $(14,0)$ are in the inertia set.  To construct the point $(6,6)$ we construct a $0,1,-1$ matrix in ${\cal S}(G)$ by puttings $0$s on the diagonal and $1$ for the edges except for five edges which receive $-1$, these five edges are every other spoke between the inner and outer cycles and are marked in Figure~\ref{fig:desmarked}.

\begin{figure}[hbt]
\centering
\includegraphics[scale=1]{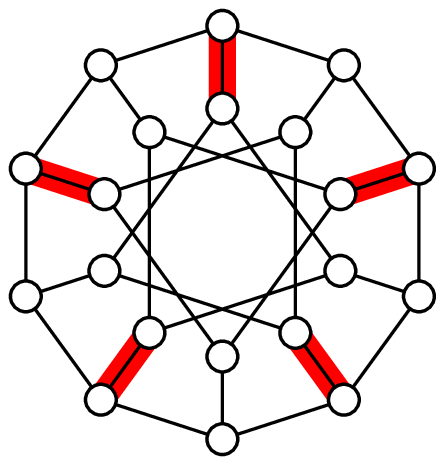}
\caption{Black edges are assigned $1$, red edges are $-1$ to get $(6,6)$ for the inertia set.}
\label{fig:desmarked}
\end{figure}

The eigenvalues for the resulting matrix are $[-\sqrt5]^6,[0]^8,[\sqrt5]^6$ (where exponents denote multiplicity).  This shows that $(6,6)$ is in the set by construction.  Further if we add $\sqrt5I$ then the resulting matrix has eigenvalues $[0]^6,[\sqrt5]^8,[2\sqrt5]^6$ showing that $(14,0)$ is in the set.

\subsection*{Using $\widehat{Z}_q$ to show diagonals are zero}
For the matrix that we constructed to show that $(6,6)$ was in the inertia set for Desargues graph we had that the diagonal entries were all zero.  For Desargues graph it can be shown that the \emph{only} matrices that can give $(6,6)$ must have all zeroes on the diagonal.

This can be shown by looking at $\widehat{Z}_q$.  In particular, if we look over all the possible ways to have the vertices looped and unlooped then we see that $\widehat{Z}_6(G)$ achieves the maximum at the unique assignment of unlooped at each vertex, i.e., any matrix with a nonzero on the diagonal would not have maximum nullity (assuming that a matrix does achieve this maximum).

Desargues graph is not the only graph with this property.  Others include complete multipartite graphs where each part is of size at least $3$, as well as $K_{5,5}$ minus a perfect matching and many others.

\subsection*{Disjoint union of graphs}
When computing the inertia set of a disjoint union of graphs we can compute the inertia of each graph separately and then take the Minkowski sum of the inertia sets.  Similarly, given the lower bounds for two graphs we can take the Minkowski sum of their lower bounds to produce a lower bound for their disjoint union.

For finding the lower bounds of inertia sets by computing $Z_q$ of the disjoint union of two or more graphs, it is better to do it for each graph separately and combine the results.  One reason is that this is computationally easier.  Another reason is that for some graphs the bounds are worse when we compute $Z_q$ together than when we combine the two results.  For example consider the graph $Q$ shown in Figure~\ref{fig:unionbad}.

\begin{figure}[h]
\centering
\includegraphics{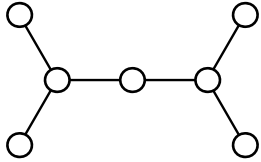}
\caption{A tree on $7$ vertices with $Z_0(Q)=1$, $Z_1(Q)=2$, and $Z_2(Q)=3$.}
\label{fig:unionbad} 
\end{figure}

By taking the Minkowski sum of two copies of the lower bounds for this graph we can conclude that for the disjoint union of two copies of $Q$ that the maximum nullity is at most $4$ when considering matrices with $2$ negative eigenvalues.  However we have $Z_2(Q\sqcup Q)=5$, which is a worse bound.

In general it is possible to show that
\[
\max_{s+t=q}\big(Z_s(G)+Z_t(H)\big)\le Z_q(G\sqcup H),
\]
and the example above shows that we can have a strict inequality.  (Note that if the parameter did behave like a Minkowski sum we would have an equality in the above relationship.)

\section{Algorithmic implementation for computing $Z_q$}\label{sec:algo}
Several algorithms for computing the zero forcing number of a graph have been implemented in {\sc sage}.  One brute force method is to simply examine all subsets of the vertices, starting with the singletons, and determine which of these subsets when colored black can force all of the other vertices to black by repeated application of the color change rule.  The smallest such subset is a minimal zero forcing set and its size gives the zero forcing number.

Our approach will be a similar brute force approach in that we will look at all sets $B\subseteq V$ and determine the minimal number of tokens that Black must spend to make all the vertices black given that Black is starting with the vertices in $B$ already colored black.  The algorithm will make use of the color change rule and we will continually apply the color change rule when we can since this is always free (this reduces the amount of information that needs to be stored and the number of cases that need to be considered).   Algorithm~\ref{alg:force} gives a simple method to force as much as possible given a subset of vertices of a graph already colored black.  (For a vertex $v$ of a graph, ${\rm nbd}(v)$ is the set of neighbors of $v$.)

\begin{algorithm}
\SetKwInOut{Input}{input}\SetKwInOut{Output}{output}\SetKw{Return}{return}
\Input{A graph $G=(V,E)$ with $B\subset V(G)$ a set of vertices colored black}
\Output{Maximal set of vertices $C$ that can be colored black by using repeated applications of the color change rule}
\BlankLine
$C\leftarrow B$\;
\Repeat{$B=C$}{
	$B\leftarrow C$\;
	\ForAll{$v\in B$}{
		\If{${\rm nbd}(v)\setminus B=\{w\}$}{
			$C\leftarrow C\cup \{w\}$\;
		}
	}
}
\Return{$C$\;}
\caption{The procedure $F(G,B)$ for applying the color change rule.}
\label{alg:force}
\end{algorithm}

The approach for finding $Z_q(G)$ will be to calculate the minimal number of tokens needed to guarantee that Black can get from a current coloring of vertices to having all vertices colored black under the rules of the game.  In other words, for each $U\subseteq V$ we associate a cost to it, denoted ${\rm cost}(U)$, that will indicate the minimal number of tokens Black needs to get all vertices colored black given that $U$ is already colored black.  So for example when all vertices are already colored black we need $0$ tokens and so ${\rm cost}(V)=0$ while when all vertices are colored white then the minimal cost is $Z_q(G)$, i.e., ${\rm cost}(\emptyset)=Z_q(G)$.

We work backwards, since we already know ${\rm cost}(V)$, from the large subsets to smaller subsets.  At each stage we consider each of the three options available to Black and choose the option which minimizes cost.  The most difficult part of doing this is determining what occurs for the option when White is involved (lines 5-12 of Algorithm~\ref{alg:Znum}).  In this case we consider every possible set of subsets to hand to White and we calculate the cost of each one being returned to us, since we are assuming that White will try to block us we take the cost as the most expensive of these options.

The steps involved are shown in Algorithm~\ref{alg:Znum}, a variation of which has been implemented in {\sc sage} \cite{sage}, and is publicly available online.

\begin{algorithm}
\SetKwInOut{Input}{input}\SetKwInOut{Output}{output}
\SetKwFunction{cost}{cost}\SetKw{Return}{return}
\SetKw{KwWith}{with}\SetKw{KwLet}{let}
\SetKw{KwAnd}{and}
\SetKwComment{tp}{//}{}
\Input{A graph $G=(V,E)$ and parameter $q$}
\Output{The value $Z_q(G)$}
\BlankLine
$\cost(V)\leftarrow 0$\;
\For{$i\leftarrow |V|-1$ \KwTo $0$}{
	\ForEach{$U\subseteq V$ \KwWith $|U|=i$ \KwAnd $F(G,U)=U$}{
		$b,c,\cost(U)\leftarrow\infty$\;
		\KwLet $K$ be the sets of vertices of the connected components of $G\setminus U$\;
		\ForEach{$J\subseteq K$ \KwWith $|J|=q+1$}{
			$b'\leftarrow -\infty$\;
			\ForEach{$I\subseteq J$ \KwWith $I\ne\emptyset$}{
				$b'=\max\big(b',\cost(F(G,F(G[U\cup I[,U)))\big)$\;
			}
			$b\leftarrow\min(b,b')$\;
		}
		\ForEach{$v\in V\setminus U$}{
			$c\leftarrow\min(c,\cost\big(F(G,U\cup \{v\})\big)+1)$\;
		}
		$\cost(U)\leftarrow\min(b,c)$\;
	}
}
\Return{$\cost(\emptyset)$\;}
\caption{The procedure $Z(G,q)$ to calculate the zero forcing numbers}
\label{alg:Znum}
\end{algorithm}

The cost function generated by this algorithm can be used by Black to determine a strategy for playing that uses at most $Z_q(G)$ tokens.  Namely at each stage Black chooses an option that will let them win with the number of tokens they have available to them.  Also, Black will be able to win by spending all the tokens up front if and only if there is some $U\subseteq V$ with $|U|=Z_q(G)$ and ${\rm cost}(U)=0$.

\subsection*{Acknowledgements}
The ideas for this note grew out of a problem session at the 2010 NSF-CBMS conference, ``The Mutually Beneficial Relationship of Matrices and Graphs'', held at Iowa State University and supported by grant DMS0938261.  We particularly thank Wayne Barrett, Leslie Hogben, Steven Osborne and John Sinkovic for useful discussions and ideas.

\end{document}